\documentclass[10pt]{extarticle}

\usepackage[english]{babel}
\usepackage{graphicx}
\usepackage{framed}
\usepackage[normalem]{ulem}
\usepackage{indentfirst}
\usepackage{amsmath,amsthm,amssymb,amsfonts}
\usepackage{mathtools} 
\usepackage[italicdiff]{physics} 
\usepackage[T1]{fontenc}
\usepackage{stmaryrd}
\usepackage{lmodern,mathrsfs}
\usepackage[inline,shortlabels]{enumitem}
\setlist{topsep=2pt,itemsep=2pt,parsep=0pt,partopsep=0pt}
\usepackage[table,dvipsnames]{xcolor}
\usepackage[utf8]{inputenc}
\usepackage{csquotes} 
\usepackage[a4paper,top=0.7in,bottom=0.7in,left=0.5in,right=0.5in]{geometry}
\usepackage[most]{tcolorbox}
\usepackage[bottom,multiple]{footmisc} 
\usepackage[backend=bibtex,style=numeric]{biblatex}
\usepackage{xurl} 
\usepackage[colorlinks,linkcolor=.,citecolor=blue,urlcolor=violet]{hyperref}
\usepackage[nameinlink]{cleveref} 

\renewcommand{\ref}[1]{%
  \textcolor{YellowGreen}{\normalfont\ref{#1}}%
}
\renewcommand{\eqref}[1]{%
  \textcolor{RoyalBlue}{\normalfont(\ref{#1})}%
}

\usepackage[showisoZ=false]{datetime2}

\DTMnewdatestyle{newdate}{

}

\newcommand*{\utcpm}[1]{
\ifboolexpe{test{\ifnumequal{#1}{0}}}{}{\ifnum#1<0\else+\fi{#1}}}
\DTMnewzonestyle{newzone}{
    
}
\AtBeginDocument{
    \DTMsetdatestyle{newdate}
    \DTMsetzonestyle{newzone}
}

\usepackage{fancyhdr}
\fancypagestyle{plain}{
\fancyhf{}

\fancyfoot[C]{\sffamily\thepage}}

\newcommand{\sub}{\subseteq}

\NewDocumentCommand{\exc}{ !g }{\backslash\IfValueT{#1}{\Bqty{#1}}}

\newcommand*{\setbuild}[2]{\left\{#1\,\middle|\,#2\right\}}

\docsvlist{
    deg,ord,
}

\newcommand*{\Z}{\opbraces{\mathbb{Z}}}

\newcommand*{\F}{\opbraces{\mathbb{F}}}
\newcommand*{\K}{\opbraces{\mathbb{K}}}

\newcommand*{\powerseries}[1]{\left\llbracket#1\right\rrbracket}
\newcommand*{\Os}{\mathcal{O}}
\newcommand*{\ftil}{\widetilde{f}}
\newcommand*{\quo}[1]{\textcolor{blue!80!black}{#1}}
\newcommand*{\rem}[1]{\textcolor{red!80!black}{#1}}

\makeatletter
\renewcommand{\impliedby}{\@ifstar{ \ensuremath{\Longleftarrow}}{ \ensuremath{\Leftarrow} }} 
\renewcommand{\implies}{\@ifstar{ \ensuremath{\Longrightarrow}}{ \ensuremath{\Rightarrow} }} 
\renewcommand{\iff}{\@ifstar{ \ensuremath{\Longleftrightarrow}}{ \ensuremath{\Leftrightarrow} }} 
\makeatother

\newtheoremstyle{mystyle}{}{}{}{}{\sffamily\bfseries}{.}{ }{}
\newtheoremstyle{cstyle}{}{}{}{}{\sffamily\bfseries}{.}{ }{\thmnote{#3}}
\makeatletter
\renewenvironment{proof}[1][\proofname] {\par\pushQED{\qed}{\normalfont\sffamily\bfseries\topsep6\p@\@plus6\p@\relax #1\@addpunct{.} }}{\popQED\endtrivlist\@endpefalse}
\makeatother
\theoremstyle{mystyle}{\newtheorem{definition}{Definition}[section]}
\theoremstyle{mystyle}{\newtheorem{proposition}[definition]{Proposition}}
\theoremstyle{mystyle}{\newtheorem{theorem}[definition]{Theorem}}
\theoremstyle{mystyle}{\newtheorem{lemma}[definition]{Lemma}}
\theoremstyle{mystyle}{\newtheorem{corollary}[definition]{Corollary}}
\theoremstyle{mystyle}{\newtheorem*{remark}{Remark}}
\theoremstyle{mystyle}{\newtheorem*{remarks}{Remarks}}
\theoremstyle{mystyle}{\newtheorem*{example}{Example}}
\theoremstyle{mystyle}{\newtheorem*{examples}{Examples}}
\theoremstyle{cstyle}{\newtheorem*{conjecture}{}}

\definecolor{tcol_DEF}{HTML}{E40125} 
\definecolor{tcol_PRP}{HTML}{EB8407} 
\definecolor{tcol_LEM}{HTML}{05C4D9} 
\definecolor{tcol_THM}{HTML}{1346E4} 
\definecolor{tcol_COR}{HTML}{7904C2} 
\definecolor{tcol_EXA}{HTML}{21340A} 
\definecolor{tcol_REM}{HTML}{18B640} 
\definecolor{tcol_PRF}{HTML}{5A76B2} 

\tcbset{
    tbox_DPLTC_style/.style={
        enhanced jigsaw,
        colback=#1!10, colframe=#1!80!black,
        boxrule=0pt,
        fonttitle=\sffamily\bfseries, coltitle=black,
        separator sign={}, label separator={},
        description font=\normalfont\sffamily,
        description delimiters={(}{)},
        attach title to upper, after title={.\ },
        frame hidden, borderline west={2pt}{0pt}{#1},
        sharp corners,
        top=2pt, bottom=2pt, left=5pt, right=5pt,
        beforeafter skip=10pt, breakable,
    },
    tbox_DPLTC_style*/.style={
        tbox_DPLTC_style=#1,
        interior hidden,
        top=-2pt, bottom=-2pt,
    },
}

\tcolorboxenvironment{definition}{tbox_DPLTC_style=tcol_DEF}
\tcolorboxenvironment{proposition}{tbox_DPLTC_style=tcol_PRP}
\tcolorboxenvironment{theorem}{tbox_DPLTC_style=tcol_THM}
\tcolorboxenvironment{lemma}{tbox_DPLTC_style=tcol_LEM}
\tcolorboxenvironment{corollary}{tbox_DPLTC_style=tcol_COR}
\tcolorboxenvironment{proof}{boxrule=0pt,boxsep=0pt,blanker,borderline west={2pt}{0pt}{CadetBlue!80!white},left=8pt,right=8pt,sharp corners,before skip=10pt,after skip=10pt,breakable}
\tcolorboxenvironment{remark}{tbox_DPLTC_style*=tcol_REM}
\tcolorboxenvironment{remarks}{tbox_DPLTC_style*=tcol_REM}
\tcolorboxenvironment{example}{tbox_DPLTC_style*=tcol_EXA}
\tcolorboxenvironment{examples}{tbox_DPLTC_style*=tcol_EXA}
\tcolorboxenvironment{conjecture}{boxrule=0pt,boxsep=0pt,colback={gray!10},left=8pt,right=8pt,enhanced jigsaw, borderline west={2pt}{0pt}{gray},sharp corners,before skip=10pt,after skip=10pt,breakable}

\AddToHook{env/proposition/begin}{\crefalias{definition}{proposition}}
\AddToHook{env/lemma/begin}{\crefalias{definition}{lemma}}
\AddToHook{env/theorem/begin}{\crefalias{definition}{theorem}}
\AddToHook{env/corollary/begin}{\crefalias{definition}{corollary}}

\crefname{definition}{definition}{definitions}
\crefformat{definition}{\normalfont\sffamily\bfseries\color{tcol_DEF!80} #2definition~#1#3}
\crefrangeformat{definition}{\normalfont\sffamily\bfseries\color{tcol_DEF} #3definitions~#1#4 {\normalfont\color{black}and} #5#2#6}
\Crefformat{definition}{\normalfont\sffamily\bfseries\color{tcol_DEF} #2Definition~#1#3}
\Crefrangeformat{definition}{\normalfont\sffamily\bfseries\color{tcol_DEF} #3Definitions~#1#4 {\normalfont\color{black}and} #5#2#6}

\crefname{proposition}{proposition}{propositions}
\crefformat{proposition}{\normalfont\sffamily\bfseries\color{tcol_PRP} #2proposition~#1#3}
\crefrangeformat{proposition}{\normalfont\sffamily\bfseries\color{tcol_PRP} #3propositions~#1#4 {\normalfont\color{black}and} #5#2#6}
\Crefformat{proposition}{\normalfont\sffamily\bfseries\color{tcol_PRP} #2Proposition~#1#3}
\Crefrangeformat{proposition}{\normalfont\sffamily\bfseries\color{tcol_PRP} #3Propositions~#1#4 {\normalfont\color{black}and} #5#2#6}

\crefname{lemma}{lemma}{lemmas}
\crefformat{lemma}{\normalfont\sffamily\bfseries\color{tcol_LEM} #2lemma~#1#3}
\crefrangeformat{lemma}{\normalfont\sffamily\bfseries\color{tcol_LEM} #3lemmas~#1#4 {\normalfont\color{black}and} #5#2#6}
\Crefformat{lemma}{\normalfont\sffamily\bfseries\color{tcol_LEM} #2Lemma~#1#3}
\Crefrangeformat{lemma}{\normalfont\sffamily\bfseries\color{tcol_LEM} #3Lemmas~#1#4 {\normalfont\color{black}and} #5#2#6}

\crefname{theorem}{theorem}{theorems}
\crefformat{theorem}{\normalfont\sffamily\bfseries\color{tcol_THM} #2theorem~#1#3}
\crefrangeformat{theorem}{\normalfont\sffamily\bfseries\color{tcol_THM} #3theorems~#1#4 {\normalfont\color{black}and} #5#2#6}
\Crefformat{theorem}{\normalfont\sffamily\bfseries\color{tcol_THM} #2Theorem~#1#3}
\Crefrangeformat{theorem}{\normalfont\sffamily\bfseries\color{tcol_THM} #3Theorems~#1#4 {\normalfont\color{black}and} #5#2#6}

\crefname{corollary}{corollary}{corollaries}
\crefformat{corollary}{\normalfont\sffamily\bfseries\color{tcol_COR} #2corollary~#1#3}
\crefrangeformat{corollary}{\normalfont\sffamily\bfseries\color{tcol_COR} #3corollaries~#1#4 {\normalfont\color{black}and} #5#2#6}
\Crefformat{corollary}{\normalfont\sffamily\bfseries\color{tcol_COR} #2Corollary~#1#3}
\Crefrangeformat{corollary}{\normalfont\sffamily\bfseries\color{tcol_COR} #3Corollaries~#1#4 {\normalfont\color{black}and} #5#2#6}

\usepackage[explicit]{titlesec}
\titleformat{\section}{\fontsize{16}{20}\sffamily\bfseries}{\thesection}{16pt}{#1}
\titleformat{\subsection}{\fontsize{12}{16}\sffamily\bfseries}{\thesubsection}{12pt}{#1}
\titleformat{\subsubsection}{\fontsize{10}{12}\sffamily\large\bfseries}{\thesubsubsection}{8pt}{#1}
\titlespacing*{\section}{0pt}{5pt}{5pt}
\titlespacing*{\subsection}{0pt}{5pt}{5pt}
\titlespacing*{\subsubsection}{0pt}{5pt}{5pt}

\DeclareMathAlphabet\mathbfcal{OMS}{cmsy}{b}{n}
\makeatletter
\g@addto@macro\normalsize{
    \setlength\abovedisplayskip{3pt}
    \setlength\belowdisplayskip{3pt}
    \setlength\abovedisplayshortskip{0pt}
    \setlength\belowdisplayshortskip{0pt}
}
\makeatother

\makeatletter
\renewcommand\maketitle{
    \begin{center}\sffamily
        {\huge\bfseries\@title}\\
            \vspace{6mm}
        {\Large\@author}\\
            \vspace{6mm}
        {\large\@date}
    \end{center}
}
\makeatother

\title{A Necessary and Sufficient Condition for Uniqueness of Euclidean Division}
\author{Senan Sekhon}
\date{\DTMToday} 

\begin{document}

\thispagestyle{plain}

\maketitle

\begin{abstract}
    A well-known result from the 1960s characterizes all Euclidean domains in which division is guaranteed to produce a unique quotient and remainder. As this relies on the historical (and more restrictive) definition of a Euclidean domain, the question of whether the result still holds under the modern definition was left open. In this paper, we prove the answer is afirmative.
\end{abstract}

\begin{tcolorbox}[enhanced,
    title={Contents},
    fonttitle=\Large\sffamily\bfseries\selectfont,
    fontupper=\sffamily,
    top=2mm,bottom=2mm,left=2mm,right=2mm,
    beforeafter skip=10mm,
    drop fuzzy shadow,breakable
]
    \makeatletter
    \@starttoc{toc}
    \makeatother
\end{tcolorbox}

\begin{tcolorbox}[enhanced,
    frame hidden, interior hidden,
    title={Conventions},
    coltitle=black,
    fonttitle=\large\sffamily\bfseries\selectfont,
    top=2mm,bottom=2mm,left=2mm,right=2mm,
    beforeafter skip=10mm
]
    \begin{itemize}[label={},leftmargin=0pt]
        \item $\Z_0^+$ denotes the set $\{0,1,2,3,\ldots\}$ of non-negative integers (including $0$).
        \item $\K$ denotes an arbitrary field.
        \item $R$ denotes an integral domain (a commutative ring with a multiplicative identity and no nonzero zero divisors).
        \item $f$ denotes a Euclidean function on $R$.
    \end{itemize}
\end{tcolorbox}

\section{Background}

In a Euclidean domain, the quotient and remainder of Euclidean division are generally not unique. A well-known result, proved independently by Rhai (1962) \cite{rhai} and Jodeit (1967) \cite{jodeit}, fully characterizes the cases where they \emph{are} unique: If $R$ is a Euclidean domain in which the quotient and remainder of Euclidean division are always unique, then either $R$ is a field or $R\cong\K[x]$ for some field $\K$.\\

However, both Rhai and Jodeit used the original definition of a Euclidean domain (\Cref{def:strongly_euclidean}), which was standard at the time but more restrictive than the modern definition (\Cref{def:euclidean_domain}), involving an extra assumption for the Euclidean function. In other words, the old and new definitions agree on what a Euclidean \emph{domain} is, but they disagree on what qualifies as a Euclidean \emph{function}. A result of Rogers (1971) \cite{rogers} states that one can always enforce this assumption by constructing a new ``refined'' Euclidean function. \\

Since uniqueness depends on the Euclidean \emph{function} (not just the Euclidean \emph{domain} itself), this left a gap in the literature: Jodeit and Rhai's result holds assuming uniqueness under a \emph{refined} Euclidean function, as per Rogers' construction. Is uniqueness under \emph{some} Euclidean function (without any modification) enough to conclude that $R$ is a field or $R\cong\K[x]$? This paper closes that gap.

\section{Preliminary definitions}

\begin{definition}\label{def:euclidean_domain}
    A \textbf{Euclidean domain} (or \emph{Euclidean ring}) is an integral domain $R$, together with a function $f:R\exc{0}\to\Z_0^+$, such that:
    \begin{equation*}
        \text{For all } a,b\in R, b\ne 0, \text{ there exist } q,r\in R \text{ such that } a=qb+r \text{ and } (r=0 \text{ or } f(r)<f(b))
    \end{equation*}
    Any function $f$ satisfying this condition is known as a \textbf{Euclidean function} on $R$.
\end{definition}
\begin{remarks}\leavevmode
    \begin{enumerate}
        \item The Euclidean function $f$ is \emph{not} required to be defined at $0$, and defining $f(0)$ is superfluous as we already explicitly allow $r=0$ in the definition.
        \item By convention, the codomain of $f$ is taken to be $\Z_0^+$, but it may equivalently be any non-empty subset of $\Z$ that is bounded below (this is to ensure, crucially, that it is well-ordered).
    \end{enumerate}
\end{remarks}

\begin{examples}\leavevmode
    \begin{enumerate}
        \item Every field is Euclidean with respect to every function (since we can always set $q=ab^{-1}$ and $r=0$).
        \item $\Z$ is Euclidean with respect to the absolute value $f(n)=\abs{n}$.
    \end{enumerate}
\end{examples}

\begin{example}[Polynomial ring]
    Suppose $\K$ is a field. The ring $\K[x]$ of formal polynomials in one variable with coefficients in $\K$ is Euclidean with respect to the \emph{degree} function $\deg:\K[x]\exc{0}\to\Z_0^+$. Explicitly:
    \begin{align*}
        \K[x] = \setbuild{\sum_{n=0}^N a_nx^n}{N\in\Z_0^+, a_n\in\K} &&
        \deg(\sum_{n=0}^N a_nx^n) = \max\setbuild{n\in\Z_0^+}{a_n\ne 0}
    \end{align*}
    It is a common convention to set $\deg(0)=-\infty$, but we will not need it here.
\end{example}

\begin{example}[Power series ring]
    Suppose $\K$ is a field. The ring $\K\powerseries{x}$ of formal power series in one variable with coefficients in $\K$ is Euclidean with respect to the \emph{order} function $\ord:\K\powerseries{x}\exc{0}\to\Z_0^+$. Explicitly:
    \begin{align*}
        \K\powerseries{x} = \setbuild{\sum_{n=0}^\infty a_nx^n}{a_n\in\K} &&
        \ord(\sum_{n=0}^\infty a_nx^n) = \min\setbuild{n\in\Z_0^+}{a_n\ne 0}
    \end{align*}
    It is a common convention to set $\ord(0)=+\infty$, but we will not need it here.
\end{example}

\begin{example}[Quadratic integer rings]
    For each $d\in\Z$, the \emph{quadratic integer ring} $\Os_d$ and its \emph{norm} $N:\Os_d\to\Z_0^+$ are given by:
    \begin{align*}
        \Os_d =
        \begin{cases}
            \Z[\frac{1+\sqrt{d}}{2}] & d\equiv 1\pmod{4} \\[5pt]
            \Z[\sqrt{d}] & \text{otherwise}
        \end{cases}
        &&
        N\qty(a+b\sqrt{d}) = a^2-b^2d
    \end{align*}
    The question ``For which values of $d$ is $N$ Euclidean on $\Os_d$?'' led to a century-long investigation by dozens of mathematicians, ending in 1952. The result is:
    \begin{center}
        $N$ is Euclidean on $\Os_d$ if and only if $d=-11,-7,-3,-2,-1,2,3,5,6,7,11,13,17,19,21,29,33,37,41,57,73$.
    \end{center}
    See \cite[\S4]{lemmermeyer} for a compilation of the results that led to this classification.
\end{example}

In what follows, $R$ always denotes an integral domain and $f$ always denotes a Euclidean function on $R$, unless otherwise specified. For brevity, we will merely write ``$f$ is Euclidean'', where the underlying integral domain is assumed to be $R$.\\

\Cref{def:euclidean_domain} is the modern, widely accepted definition of a Euclidean domain. All subsequent terminology is non-standard and solely to improve readability.

\begin{definition}\label{def:strongly_euclidean}
    $f$ is \textbf{strongly Euclidean} on $R$ if $f$ is Euclidean on $R$, and for all $a,b\in R\exc{0}$, we have $f(a)\le f(ab)$.
\end{definition}

\begin{examples}\leavevmode
    \begin{enumerate}
        \item The absolute value is strongly Euclidean on $\Z$.
        \item The degree is strongly Euclidean on $\K[x]$.
        \item The order is strongly Euclidean on $\K\powerseries{x}$.
        \item The norm $N$ is strongly Euclidean on $\Os_d$ if and only if it is Euclidean on $\Os_d$.
        \item If $R$ is a field, then $f$ is strongly Euclidean on $R$ if and only if it is constant (otherwise, we would have $f(a)>f(c)$ for some $a,c\in R\exc{0}$, and so $f(a)>f(a(a^{-1}c)$).
    \end{enumerate}
\end{examples}

The following lemma characterizes exactly when the inequality in \Cref{def:strongly_euclidean} is an equality.

\begin{lemma}\label{strongly_euclidean_equality_iff_unit}
    Suppose $f$ is strongly Euclidean on $R$. Then for all $a,b\in R\exc{0}$, we have $f(a)=f(ab)$ if and only if $b\in R^\times$.
\end{lemma}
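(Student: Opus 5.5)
We prove the two implications separately. The backward direction uses only the multiplicativity inequality of \Cref{def:strongly_euclidean}. The forward direction uses the division property of \Cref{def:euclidean_domain} applied to a well-chosen pair.

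\emph{($\Leftarrow$)} Suppose $b\in R^\times$. Applying \Cref{def:strongly_euclidean} to the pair $(a,b)$ gives $f(a)\le f(ab)$. Applying it to the pair $(ab,b^{-1})$ gives $f(ab)\le f(ab\cdot b^{-1})=f(a)$. Together these give $f(a)=f(ab)$.

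\emph{($\Rightarrow$)} We argue by contrapositive. Assume $b\notin R^\times$; we show $f(a)<f(ab)$, which is enough since $f(a)\le f(ab)$ always holds.

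Divide $a$ by $ab$ (note $ab\ne 0$ since $R$ is a domain). This gives $q,r\in R$ with $a=q\,ab+r$ and either $r=0$ or $f(r)<f(ab)$.

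\begin{enumerate}
    \item \emph{Case $r=0$.} Then $a=qab$, so cancellation in the domain gives $1=qb$. Hence $b$ is a unit, a contradiction. So $r\neq 0$.
    \item \emph{Case $r\neq 0$.} We have $r=a(1-qb)$. Since $b$ is not a unit, $qb\ne 1$, so $1-qb\ne 0$. Applying \Cref{def:strongly_euclidean} to the pair $(a,1-qb)$ gives $f(a)\le f(a(1-qb))=f(r)$. Combining with $f(r)<f(ab)$ yields $f(a)\le f(r)<f(ab)$, so $f(a)\ne f(ab)$.
\end{enumerate}

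The main obstacle is choosing what to divide by what. Dividing $a$ by $ab$ is the right choice because the remainder is then automatically a multiple of $a$. This lets the strong Euclidean property bound $f(r)$ from below by $f(a)$, while the division property bounds it above by $f(ab)$ strictly. The only remaining care is to check that $r\ne0$ and $1-qb\ne 0$, and both follow from cancellation in the integral domain.
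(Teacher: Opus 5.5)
Your proposal is correct and follows essentially the same route as the paper. For the backward direction you apply the inequality to $(ab,b^{-1})$, and for the forward direction you divide $a$ by $ab$ and obtain $f(a)\le f(a(1-qb))=f(r)<f(ab)$; the only difference is that you phrase the forward direction as a contrapositive, where the paper argues by contradiction.
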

\begin{proof}
    \begin{itemize}
        \item[($\impliedby$)] Suppose $b\in R^\times$. Then $bc=1$ for some $c\in R$. This yields $f(ab)\le f(abc)=f(a)$, and so $f(a)=f(ab)$.
        \item[($\implies$)] Suppose $f(a)=f(ab)$ but $b\notin R^\times$. Since $f$ is a Euclidean function on $R$, there exist $q,r\in R$ such that:
        \begin{equation*}
            a = q(ab)+r \qq{ and } (r=0 \text{ or } f(r)<f(ab))
        \end{equation*}
        This can be rewritten as:
        \begin{equation*}
            a(1-bq)=r \qq{ and } (r=0 \text{ or } f(r)<f(a))
        \end{equation*}
        Since $b\notin R^\times$, we have $1-bq\ne 0$, and since $R$ is an integral domain, we have $a(1-bq)\ne 0$. Thus $r\ne 0$, and so $f(a(1-bq))<f(a)$, a contradiction as $f$ is strongly Euclidean. Thus $b\in R^\times$. \qedhere
    \end{itemize}
\end{proof}
\begin{remark}
    This does NOT mean $f(x)=f(y)\implies x$ and $y$ are unit multiples of each other. It means that \emph{if} $y$ is a multiple of $x$ (or vice versa) and $f(x)=f(y)$, then $x$ and $y$ are unit multiples of each other.
\end{remark}

\begin{example}
    In $\Z[i]$, we have $N(2+i)=N(2-i)=5$, but $2+i$ and $2-i$ are not multiples of each other in $\Z[i]$.
\end{example}

\begin{lemma}\label{strongly_euclidean_minimized_by_units}
    Suppose $f$ is strongly Euclidean on $R$, and define $m=\min_{a\in R\exc{0}} f(a)$. Then $f(b)=m$ if and only if $b\in R^\times$.\\
    In other words, the minimum value of a strongly Euclidean function is attained precisely by the units.
\end{lemma}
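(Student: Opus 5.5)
The plan is to derive both directions from the single comparison $f(1)\le f(b)$, which holds for every $b\in R\exc{0}$, together with \Cref{strongly_euclidean_equality_iff_unit}. First, $m$ is well defined because $f$ takes values in the well-ordered set $\Z_0^+$ and $R\exc{0}$ is nonempty (it contains $1$). The key observation is that strong Euclideanity, applied with $a=1$, gives $f(1)\le f(1\cdot b)=f(b)$ for every $b\in R\exc{0}$. Since $1$ is itself nonzero, this shows $m=f(1)$.

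\textbf{Forward direction.} Suppose $f(b)=m$. Then $f(1)=f(1\cdot b)$. Applying \Cref{strongly_euclidean_equality_iff_unit} with $a=1$ gives $b\in R^\times$.

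\textbf{Reverse direction.} Suppose $b\in R^\times$. Applying \Cref{strongly_euclidean_equality_iff_unit} with $a=1$ in the other direction gives $f(b)=f(1\cdot b)=f(1)=m$.

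There is no real obstacle here. The only step needing care is identifying $m$ with $f(1)$, which is what lets us apply \Cref{strongly_euclidean_equality_iff_unit} with $a=1$. Everything else follows immediately from that lemma.
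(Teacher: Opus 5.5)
Your proposal is correct and follows the paper's own argument: the paper also substitutes $a=1$ into the definition of strongly Euclidean, which gives $m=f(1)$, and into \Cref{strongly_euclidean_equality_iff_unit}, which gives the equivalence. Your write-up spells out the two directions and the identification $m=f(1)$ that the paper leaves implicit.
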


This follows directly by substituting $a=1$ into \Cref{def:strongly_euclidean} and \Cref{strongly_euclidean_equality_iff_unit}.\\

\Cref{def:strongly_euclidean} was the original definition of a Euclidean domain (an integral domain with a \emph{strongly} Euclidean function), and is used in \cite{herstein} and \cite{lovett}.\\

Rogers proved in 1971 \cite{rogers} that every Euclidean function can be ``refined'' into a strongly Euclidean function, via the following construction:

\begin{definition}\label{def:refinement}
    Suppose $f$ is Euclidean on $R$. The \textbf{refinement} of $f$, denoted by $\ftil$, is given by:
    \begin{equation}\label{eq:refinement}
        \ftil(a) = \min_{b\in R\exc\{0\}} f(ab)
    \end{equation}
\end{definition}

\begin{proposition}\label{euclidean_function_refinement}
    Suppose $f$ is Euclidean on $R$. Then $\ftil$ is strongly Euclidean on $R$.
\end{proposition}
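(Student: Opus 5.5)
We must show two things: that $\ftil$ satisfies $\ftil(a)\le\ftil(ab)$ for all nonzero $a,b$, and that $\ftil$ is Euclidean. First, $\ftil$ is well defined: for $a\ne0$ the set $\setbuild{f(ab)}{b\in R\exc{0}}$ is a nonempty subset of $\Z_0^+$ (nonempty since $ab\neq 0$ for $b\neq 0$ in an integral domain), so its minimum exists by well-ordering. Note also $\ftil(a)\le f(a)$ by taking $b=1$.

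The monotonicity property is immediate. For $a,b\in R\exc{0}$ we have
\begin{equation*}
    \ftil(ab)=\min_{c\in R\exc{0}} f(abc)\ge \min_{d\in R\exc{0}} f(ad)=\ftil(a),
\end{equation*}
because every element $bc$ with $c\ne0$ is itself a nonzero element of $R$, so the first minimum ranges over a subset of the values in the second.

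The main step is the Euclidean property. Given $a,b$ with $b\neq0$, I will choose $c\in R\exc{0}$ attaining the minimum, so $\ftil(b)=f(bc)$, and apply the Euclidean property of $f$ to the pair $(a,bc)$. This gives $q,r$ with $a=q(bc)+r$ and either $r=0$ or $f(r)<f(bc)=\ftil(b)$. Setting $q'=qc$, we get $a=q'b+r$. If $r=0$ we are done. Otherwise $\ftil(r)\le f(r)<\ftil(b)$, using $\ftil\le f$. So the same remainder works for $\ftil$, with the quotient multiplied by $c$.

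I expect no real obstacle. The only subtle point is to divide by the multiple $bc$ that realizes the minimum, rather than by $b$ itself, and then to absorb $c$ into the quotient. Dividing by $b$ directly would only give $f(r)<f(b)$, which need not be below $\ftil(b)$.
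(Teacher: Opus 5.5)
Your proposal is correct and follows the paper's proof essentially step for step. The paper also shows well-definedness by well-ordering, proves monotonicity because the values $f(abc)$ are among the values $f(ad)$, and gets the Euclidean property by dividing by a minimizing multiple $bc$ and absorbing $c$ into the quotient, with your explicit use of the integral-domain hypothesis (so that $f(ab)$ is defined) being a point the paper leaves implicit.
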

\begin{proof}
    Note that $\ftil$ is well-defined, since for every $a\in R\exc\{0\}$, the set $\setbuild{f(ab)}{b\in R\exc\{0\}}$ is a non-empty subset of $\Z_0^+$, so it has a minimum. Also, for all $a\in R\exc\{0\}$, we have $\ftil(a)\le f(a\cdot 1)=f(a)$.\\
    
    We first show that $\ftil$ is Euclidean on $R$. Suppose $a,b\in R$, $b\ne 0$. By \eqref{eq:refinement}, there exists $c\in R\exc\{0\}$ such that $\ftil(b)=f(bc)$. Since $f$ is Euclidean on $R$, there exist $q,r\in R$ such that $a=q(bc)+r$ and ($r=0$ or $f(r)<f(bc)$). This yields $a=(cq)b+r$. We want to show that $r=0$ or $\ftil(r)<\ftil(bc)$. If $r=0$, we are done. If not, we have:
    \begin{equation*}
        \ftil(r) \le f(r) < f(bc) = \ftil(b)
    \end{equation*}
    Thus $\ftil$ is Euclidean on $R$.\\

    Now suppose $a,b\in R\exc\{0\}$. By \eqref{eq:refinement}, there exists $c\in R\exc{0}$ such that $\ftil(ab)=f(abc)$. Since $abc$ is also a nonzero multiple of $a$, we have:
    \begin{equation*}
        \ftil(a) \le f(abc) = \ftil(ab)
    \end{equation*}
    Thus $\ftil(a)\le\ftil(ab)$, and so $\ftil$ is strongly Euclidean on $R$.
\end{proof}

\begin{corollary}\label{strongly_euclidean_preserves_refinement}
    $f$ is strongly Euclidean on $R$ if and only if $\ftil=f$.
\end{corollary}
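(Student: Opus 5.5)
We prove the two implications separately, and both come almost directly from \Cref{def:refinement} and \Cref{euclidean_function_refinement}.

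For ($\impliedby$), suppose $\ftil=f$. By \Cref{euclidean_function_refinement}, $\ftil$ is strongly Euclidean on $R$. Since $f$ equals $\ftil$, it follows at once that $f$ is strongly Euclidean.

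For ($\implies$), suppose $f$ is strongly Euclidean, and fix $a\in R\exc{0}$. We show $\ftil(a)=f(a)$ by proving both inequalities.
\begin{itemize}
    \item \emph{Upper bound.} The proof of \Cref{euclidean_function_refinement} already records $\ftil(a)\le f(a\cdot 1)=f(a)$, since $b=1$ is an admissible choice in \eqref{eq:refinement}.
    \item \emph{Lower bound.} \Cref{def:strongly_euclidean} gives $f(a)\le f(ab)$ for every $b\in R\exc{0}$. Taking the minimum over all such $b$ gives $f(a)\le\ftil(a)$.
\end{itemize}
Hence $\ftil(a)=f(a)$ for all nonzero $a$, so $\ftil=f$.

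There is no real obstacle here. The only point needing care is that the minimum in \eqref{eq:refinement} exists and is attained, and this was already justified in the proof of \Cref{euclidean_function_refinement}. So the lower bound is just the inequality $f(a)\le f(ab)$ passed through that minimum.
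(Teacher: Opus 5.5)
Your proposal is correct and matches the paper's proof essentially step for step. The reverse direction is read off from \Cref{euclidean_function_refinement}, and the forward direction combines $\ftil(a)\le f(a\cdot 1)=f(a)$ with the strongly Euclidean inequality $f(a)\le f(ab)$, taken over all nonzero $b$ to get $f(a)\le\ftil(a)$.
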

\begin{proof}
    \begin{itemize}
        \item[($\impliedby$)] By \Cref{euclidean_function_refinement}, $\ftil$ is strongly Euclidean, so if $\ftil=f$, then $f$ is strongly Euclidean.
        \item[($\implies$)] Suppose $a\in R\exc{0}$. As we have shown in \Cref{euclidean_function_refinement}, $\ftil(a)\le f(a)$. Since $f$ is strongly Euclidean, we have $f(ab)\ge f(a)$ for all $b\in R\exc{0}$, and so $\ftil(a)\ge f(a)$. Thus $\ftil(a)=f(a)$. \qedhere
    \end{itemize}
\end{proof}

\begin{definition}\label{def:ultra_euclidean}
    $f$ is \textbf{ultra-Euclidean} on $R$ if $f$ is Euclidean on $R$, and for all $a,b\in R\exc{0}$ such that $a+b\ne 0$, we have $f(a+b)\le\max\{f(a),f(b)\}$.
\end{definition}
\begin{remark}
    The name ``ultra-Euclidean'' comes from the similarity to the triangle inequality in \emph{ultrametric} spaces: $d(x,z)\le\max\{d(x,y),d(y,z)\}$.
\end{remark}

\begin{examples}\leavevmode
    \begin{enumerate}
        \item The degree is ultra-Euclidean on $\K[x]$.
        \item The order is \emph{not} ultra-Euclidean on $\K\powerseries{x}$, since $\ord(1+(x-1))=\ord(x)=1>0=\max\{\ord(1),\ord(x-1)\}$.
        \item The absolute value is \emph{not} ultra-Euclidean on $\Z$, since $\abs{1+1}=2>1=\max\{\abs{1},\abs{1}\}=1$.
    \end{enumerate}
\end{examples}

\begin{definition}
    $f$ is \textbf{uniquely Euclidean} on $R$ if for all $a,b\in R$, $b\ne 0$, there exist \emph{unique} $q,r\in R$ such that $a=qb+r$ and ($r=0$ or $f(r)<f(b)$).
\end{definition}

\begin{examples}\leavevmode
    \begin{enumerate}
        \item The degree is uniquely Euclidean on $\K[x]$.
        \item The order is \emph{not} uniquely Euclidean on $\K\powerseries{x}$, since $1=0\cdot x+1=1\cdot x+(1-x)$, and $\ord(1)=\ord(1-x)<\ord(x)$. In fact, this works with $1=a\cdot x+(1-ax)$ for \emph{any} $a\in\K$. Thus, if $\K$ is infinite, there are always \emph{infinitely} many Euclidean divisions of $1$ by $x$.
        \item The absolute value is \emph{not} uniquely Euclidean on $\Z$, since $1=0\cdot 2+1=1\cdot 2+(-1)$ and $\abs{1}=\abs{-1}<\abs{2}$. In fact, for all $a,b\in\Z$, $b\ne 0$, if $b$ does not divide $a$, there will always be two Euclidean divisions of $a$ by $b$, one with a positive remainder and one with a negative remainder.
    \end{enumerate}
\end{examples}

\section{The original result}

We now state Jodeit and Rhai's main result in our terminology:

\begin{theorem}\label{jodeit_rhai}
    Suppose $f$ is strongly Euclidean and uniquely Euclidean on $R$. Then either $R$ is a field or $R\cong\K[x]$ for some field $\K$.
\end{theorem}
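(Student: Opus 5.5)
I would follow the classical strategy. The first step is to show that a strongly and uniquely Euclidean $f$ is ultra-Euclidean (\Cref{def:ultra_euclidean}). The second step is to extract from $R$ a subfield $\K$ and an element $x$ generating $R$ as a polynomial ring over $\K$.

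\textbf{Step 1 (ultra-Euclidean property).} Let $a,b\ne 0$ with $a+b\ne 0$, and suppose $f(a+b)>\max\{f(a),f(b)\}$. Setting $c=a+b$, I obtain two divisions of $a$ by $c$:
\begin{equation*}
    a = 0\cdot c + a = 1\cdot c + (-b).
\end{equation*}
Since $f(-b)=f(b)$ by \Cref{strongly_euclidean_equality_iff_unit}, both remainders satisfy $f<f(c)$. The quotients $0$ and $1$ differ, which contradicts uniqueness. Hence $f(a+b)\le\max\{f(a),f(b)\}$.

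\textbf{Step 2 (the subfield $\K$).} Let $m=\min f$. By \Cref{strongly_euclidean_minimized_by_units}, $f^{-1}(m)=R^\times$. Set $\K=R^\times\cup\{0\}$. It is closed under multiplication and inverses. For additive closure, take units $u,v$ with $u+v\ne0$. Then $f(u+v)\le\max\{f(u),f(v)\}=m$ by Step 1, so $f(u+v)=m$ and $u+v$ is a unit. Thus $\K$ is a subfield. If $R=\K$, then $R$ is a field and we are done.

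\textbf{Step 3 (choice of $x$).} Otherwise, choose a nonunit $x\ne0$ with $f(x)=m_1$ minimal among nonunits. Dividing any $a$ by $x$ gives $a=qx+r$ with $r=0$ or $f(r)<m_1$. In the latter case $r$ is a unit, so $r\in\K$. Iterating on the quotient $q$ gives $a=r_0+r_1x+\dots$. To see that this terminates, I would show $f(q)<f(a)$ whenever $a\notin\K$. By strong Euclideanity, $f(q)\le f(qx)$, and \Cref{strongly_euclidean_equality_iff_unit} makes this strict since $x$ is not a unit. Also $f(qx)=f(a-r)\le\max\{f(a),f(r)\}=f(a)$, because $f(r)\le m<f(a)$ when $a\notin\K$. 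Hence $f(q)<f(a)$. As $f$ takes values in a well-ordered set, the process ends, and every $a$ is a polynomial in $x$ over $\K$. So the map $\K[t]\to R$, $t\mapsto x$, is surjective.

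\textbf{Step 4 (injectivity).} This is the main obstacle. Suppose $\sum_{i=0}^n c_ix^i=0$ with $c_n\ne0$ and $n\ge1$. I would show by induction that $f(c_0+c_1x+\dots+c_nx^n)=f(x^n)$ whenever $c_n\ne0$, which would be impossible for $0$, where $f$ is undefined. The induction would use strict growth along powers: $f(x^{k})<f(x^{k+1})$ by \Cref{strongly_euclidean_equality_iff_unit}. Together with Step 1, this shows that adding lower-degree terms cannot increase $f$ beyond $f(x^n)$.

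Showing that it also cannot decrease $f$ is the delicate part. I would try writing $x^n=c_n^{-1}(p-\text{lower})$, where $p$ is the full polynomial, and applying Step 1 again. This bounds $f(x^n)\le\max\{f(p),f(\text{lower})\}$. Since $f(\text{lower})<f(x^n)$ by induction, this forces $f(p)\ge f(x^n)$.

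In the case $p=0$, this reads $x^n=-c_n^{-1}\cdot\text{lower}$, so $f(x^n)=f(\text{lower})<f(x^n)$, a contradiction. Hence $x$ is transcendental over $\K$, and $R\cong\K[x]$.
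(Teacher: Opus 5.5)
Your proof is correct. The paper does not reprove this theorem itself; it cites Jodeit and Rhai. However, its proof of the stronger \Cref{uniquely_euclidean_implies_K_or_Kx} specializes to it, and your Steps 1--3 match that argument essentially line by line. This covers the two divisions $a=0\cdot(a+b)+a=1\cdot(a+b)+(-b)$ giving ultra-Euclideanity, and additive closure of $K=R^\times\cup\{0\}$ via the ultrametric inequality. It also covers choosing $x$ to minimize $f$ on nonunits, with the estimate $f(q)<f(qx)\le f(a)$ forcing termination. You should note in passing that $q\ne 0$ when $a\notin K$, since otherwise $a=r\in K$.

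The real difference is Step 4. The paper gets injectivity of $K[t]\to R$ directly from uniqueness of division by $x$, though it leaves this implicit in the word ``uniquely''. If $\sum_{i=0}^n c_ix^i=0$ with $c_i\in K$, then $0=(c_1+c_2x+\dots+c_nx^{n-1})\cdot x+c_0$ and $0=0\cdot x+0$ are both valid divisions of $0$ by $x$, because $c_0=0$ or $f(c_0)=f(1)<f(x)$. Hence $c_0=0$ and the quotient is $0$, and one peels off the coefficients one at a time.

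You instead prove the degree formula $f(p)=f(x^{n})$ for every $p=\sum_{i\le n}c_ix^i$ with $c_n\ne 0$. This uses only strong and ultra-Euclideanity together with the strict chain $f(1)<f(x)<f(x^2)<\cdots$. Your induction is sound, including the cases where the lower part vanishes and where $p=0$.

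Your route is longer, and it uses uniqueness of division only through Step 1. In exchange it yields more: it shows that $f=\phi\circ\deg$ with $\phi(n)=f(x^n)$ strictly increasing. That is precisely the description of $f$ (Rhai's Theorem 2.2) that the paper only cites in \Cref{uniquely_euclidean_function_characterization}.
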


See \cite{jodeit} and \cite{rhai} for a proof.

\begin{remark}
    The conclusion of the theorem does not make any reference to $f$, so it applies to all integral domains that have \emph{at least one} function that is both strongly Euclidean and uniquely Euclidean.
\end{remark}

We will use the following terminology: A \textbf{candidate division} of $x$ by $y$ (where $x,y\in R$ and $y\ne 0$) is any expression of the form $x=\quo{q}y+\rem{r}$, where $q,r\in R$. It is \textbf{valid} if $r=0$ or $f(r)<f(y)$.\\

As such, saying that $f$ is Euclidean (resp. uniquely Euclidean) simply amounts to saying that for all $x,y\in R$, $y\ne 0$, there is at least one (resp. exactly one) valid candidate division of $x$ by $y$.\\

In their original proofs, both Jodeit \cite{jodeit} and Rhai \cite{rhai} prove the following result as a lemma, which we restate and prove using our terminology.

\begin{proposition}\label{uniquely_euclidean_iff_ultra_euclidean}
    Suppose $f$ is strongly Euclidean on $R$. Then $f$ is uniquely Euclidean if and only if it is ultra-Euclidean.
\end{proposition}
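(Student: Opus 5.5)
We prove the two implications separately. Throughout, $f$ is strongly Euclidean, so $f(a)\le f(ab)$ for all $a,b\in R\exc{0}$. By \Cref{strongly_euclidean_equality_iff_unit}, we also have $f(a)=f(ab)$ exactly when $b\in R^\times$.

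\textbf{Ultra-Euclidean implies uniquely Euclidean.} Suppose $x=q_1y+r_1=q_2y+r_2$ are two valid candidate divisions of $x$ by $y$, and suppose $q_1\ne q_2$. Then $r_2-r_1=(q_1-q_2)y\ne 0$. We split into cases.
\begin{itemize}
    \item If both $r_1,r_2\ne 0$, ultra-Euclideanity gives $f(r_2-r_1)\le\max\{f(r_2),f(-r_1)\}$. We have $f(-r_1)=f(r_1)$, by \Cref{strongly_euclidean_equality_iff_unit} applied with the unit $-1$. Hence $f(r_2-r_1)<f(y)$.
    \item If exactly one of $r_1,r_2$ is zero, then $r_2-r_1=\pm r_i$ for the nonzero remainder $r_i$. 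Its value under $f$ is again $f(r_i)<f(y)$.
\end{itemize}
In either case, strong Euclideanity gives $f(y)\le f((q_1-q_2)y)=f(r_2-r_1)<f(y)$, a contradiction. So $q_1=q_2$, and then $r_1=r_2$.

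\textbf{Uniquely Euclidean implies ultra-Euclidean.} We argue by contrapositive. Suppose $a,b\ne 0$ with $a+b\ne 0$ and $f(a+b)>\max\{f(a),f(b)\}$. We want two distinct valid candidate divisions by $y=a+b$. Take
\begin{equation*}
    a=0\cdot(a+b)+a \qquad\text{and}\qquad a=1\cdot(a+b)+(-b).
\end{equation*}
Both are valid: $f(a)<f(a+b)$, and $f(-b)=f(b)<f(a+b)$, using again that $f(-b)=f(b)$ by \Cref{strongly_euclidean_equality_iff_unit}. They are distinct because the quotients $0\ne 1$ differ, since $R$ is a domain and so $1\ne 0$. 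This contradicts uniqueness.

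The only step needing care is the identity $f(-r)=f(r)$, which relies on strong Euclideanity. Beyond that, the remaining risk is the bookkeeping of the cases where one remainder is zero, which are handled above. No step looks like a real obstacle.
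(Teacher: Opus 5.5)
Your proof is correct and follows essentially the same route as the paper. One direction uses the same pair of candidate divisions $a=0\cdot(a+b)+a$ and $a=1\cdot(a+b)+(-b)$; the other uses the same case analysis on which remainders vanish, combining $f(y)\le f((q_1-q_2)y)$ with $f(-r)=f(r)$, and folding the two one-zero cases into one via $r_2-r_1=\pm r_i$ is only a cosmetic difference.
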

\begin{proof}
    \begin{itemize}
        \item[($\implies$)] Suppose $f$ is not ultra-Euclidean. Then there exist $a,b\in R\exc{0}$ such that $a+b\ne 0$ and $f(a+b)>\max\{f(a),f(b)\}$. Note that the following are candidate divisions of $a$ by $a+b$:
        \begin{align}
            a &= \quo{0} \cdot (a+b) + \rem{a}
            \label{eq:uniquely_euclidean_iff_ultra_euclidean:1} \\
            a &= \quo{1} \cdot (a+b) + \rem{(-b)}
            \label{eq:uniquely_euclidean_iff_ultra_euclidean:2}
        \end{align}
        These are distinct as $\quo{0}\ne\quo{1}$. By assumption, we have $f(\rem{a})<f(a+b)$, so \eqref{eq:uniquely_euclidean_iff_ultra_euclidean:1} is valid. Also, since $f$ is strongly Euclidean and $-1\in R^\times$, by \Cref{strongly_euclidean_equality_iff_unit}, we have:
        \begin{equation*}
            f(\rem{-b}) = f((-1)b) = f(b) < f(a+b)
        \end{equation*}
        Thus \eqref{eq:uniquely_euclidean_iff_ultra_euclidean:2} is also valid, and so $f$ is not uniquely Euclidean.
        \item[($\impliedby$)] Suppose $f$ is not uniquely Euclidean. Then there exist $a,b\in R$, $b\ne 0$ such that there are at least two distinct candidate divisions of $a$ by $b$. In other words, there exist $q,r,s,t\in R$ such that:
        \begin{equation*}
            a = qb+r = sb+t
            \qq{and}
            q\ne s
            \qq{and}
            (r=0 \text{ or } f(r)<f(b))
            \qq{and}
            (t=0 \text{ or } f(t)<f(b))
        \end{equation*}
        \begin{itemize}[label=\textbullet]
            \item If $r=0$, then $a=qb=sb+t$, so $t=(q-s)b$. Since $f$ is strongly Euclidean and $q-s\ne 0$, we have $f(t)=f((q-s)b)\ge f(b)$.
            \item If $t=0$, then $a=qb+r=sb$, so $r=(s-q)b$. Since $f$ is strongly Euclidean and $s-q\ne 0$, we have $f(r)=f((s-q)b)\ge f(b)$.
            \item If $r,t\ne 0$, then $f(b)\le f((q-s)b)=f(t-r)\le\max\{f(t),f(-r)\}=\max\{f(t),f(r)\}$.
        \end{itemize}
        In any case, we get a contradiction, as either $f(r)\ge f(b)$ or $f(t)\ge f(b)$. \qedhere
    \end{itemize}
\end{proof}
\begin{remark}
    Both directions of this proof use the assumption that $f$ is strongly Euclidean. The result is not true without this assumption, as the next example shows.
\end{remark}

\begin{example}[Ultra-Euclidean $\not\implies$ strongly Euclidean]
    Suppose $R=\F_4=\{0,1,\alpha,\beta\}$, the field with four elements. Define $f:R\exc{0}\to\Z_0^+$ by $f(1)=0$, $f(\alpha)=f(\beta)=1$. Since $R$ is a field, $f$ is Euclidean.\\
    
    The only pairs $(a,b)$ of nonzero elements of $R$ whose sum is nonzero are $(1,\alpha)$, $(1,\beta)$ and $(\alpha,\beta)$. In all cases, we have $f(a+b)\le\max\{f(a),f(b)\}$, since the right side is $1$ and the left side is either $0$ or $1$. Thus $f$ is ultra-Euclidean.\\
    
    However, $f$ is not strongly Euclidean as $f(\alpha)=1>0=f(1)=f(\alpha\beta)$. It is also not uniquely Euclidean as $1=\quo{0}\alpha+\rem{1}$ and $1=\quo{1}\alpha+\rem{\beta}$ are two valid candidate divisions of $1$ by $\alpha$.
\end{example}

\section{The improved result}

The key ingredient to improving Jodeit and Rhai's result (\Cref{jodeit_rhai}) is the following:

\begin{theorem}\label{uniquely_euclidean_implies_strongly_euclidean}
    If $f$ is uniquely Euclidean on $R$, then $f$ is strongly Euclidean on $R$.
\end{theorem}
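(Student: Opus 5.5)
The plan is a direct argument by contraposition. Assume $f$ is uniquely Euclidean on $R$ but not strongly Euclidean. Since $f$ is Euclidean by hypothesis, failing \Cref{def:strongly_euclidean} means there exist $a,b\in R\exc{0}$ with $f(ab)<f(a)$. From such a pair I will build two distinct valid candidate divisions of a single element by $a$, which contradicts uniqueness.

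The natural dividend is $ab$ and the natural divisor is $a$. The two candidate divisions are
\begin{align*}
    ab &= \quo{b}\cdot a + \rem{0}, \\
    ab &= \quo{0}\cdot a + \rem{ab}.
\end{align*}
The first is valid because its remainder is $0$. The second is valid because its remainder $ab$ is nonzero, as $R$ is an integral domain, and $f(ab)<f(a)$ by assumption. The two divisions are distinct because their quotients differ, $\quo{b}\ne\quo{0}$, since $b\ne 0$. This gives two valid candidate divisions of $ab$ by $a$, contradicting the assumption that $f$ is uniquely Euclidean. Hence $f(a)\le f(ab)$ for all $a,b\in R\exc{0}$, and $f$ is strongly Euclidean.

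I do not expect a real obstacle. The only step needing care is finding the right dividend and divisor: dividing the multiple $ab$ by its factor $a$ gives an exact division and a trivial one, and the trivial one is valid precisely when strong Euclideanity fails. It is also worth checking that the integral-domain hypothesis is used only to ensure $ab\ne 0$, so that the second division is not secretly the same as the first with remainder $0$.

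Combined with \Cref{jodeit_rhai}, this statement immediately removes the hypothesis ``strongly Euclidean'' from the characterization. By the same token, \Cref{uniquely_euclidean_iff_ultra_euclidean} then shows that uniquely Euclidean is equivalent to strongly and ultra-Euclidean together.
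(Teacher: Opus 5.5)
Your proof is correct, and it is a real simplification of the paper's argument. The paper first divides $a$ by $ab$. If $a=0\cdot(ab)+a$ is valid it is done. Otherwise it takes the unique valid division $a=q(ab)+r$ and splits on whether $r=0$:
\begin{itemize}
\item In the subcase $r=0$ it uses exactly your pair $ab=b\cdot a+0$ and $ab=0\cdot a+ab$.
\item In the subcase $r\neq 0$ it applies the same trick to the multiple $r=(1-qb)a$ of $a$ to reach a contradiction.
\end{itemize}
What you noticed is that the pair of divisions of $ab$ by $a$ never uses the fact $qb=1$ available in that subcase. So it works for every pair $a,b\in R\setminus\{0\}$, and the two-level case analysis reduces to a few lines. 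The only extra thing the paper's longer route gives is the by-product that $f(ab)\le f(a)$ forces $b\in R^\times$. Once strong Euclideanity is established, this follows from \Cref{strongly_euclidean_equality_iff_unit} anyway, so nothing is lost.

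One small correction to your commentary: your two divisions are distinct because their quotients differ ($b\neq 0$), not because of the remainders. Even if $ab$ were $0$, the divisions $0=b\cdot a+0$ and $0=0\cdot a+0$ would still differ. The integral-domain hypothesis is needed only so that $f(ab)$ is defined.
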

\begin{proof}
    Suppose $a,b\in R\exc{0}$. We want to show that $f(a)\le f(ab)$. Note that the following is a candidate division of $a$ by $ab$:
    \begin{equation}\label{eq:uniquely_euclidean_implies_strongly_euclidean:1}
        a = \quo{0}\cdot (ab) + \rem{a}
    \end{equation}
    Since $a\ne 0$, this is valid if and only if $f(a)<f(ab)$.
    \begin{enumerate}
        \item If \eqref{eq:uniquely_euclidean_implies_strongly_euclidean:1} is valid, then $f(a)<f(ab)$, and we are done.
        \item If \eqref{eq:uniquely_euclidean_implies_strongly_euclidean:1} is not valid, then $f(a)\ge f(ab)$. We will show that $f(a)=f(ab)$. Suppose $a=\quo{q}(ab)+\rem{r}$ is the unique valid division of $a$ by $ab$, i.e. $\quo{q},\rem{r}\in R$ and ($\rem{r}=0$ or $f(\rem{r})<f(ab)$). Then $r=a(1-qb)$.
        \begin{enumerate}[label=(\roman{*})]
            \item If $r=0$, then $1-qb=0$, so $qb=1$. Note that the following are candidate divisions of $ab$ by $a$:
            \begin{align}
                ab &= \quo{b}\cdot a + \rem{0}
                \label{eq:uniquely_euclidean_implies_strongly_euclidean:2A} \\
                ab &= \quo{0}\cdot a + \rem{ab}
                \label{eq:uniquely_euclidean_implies_strongly_euclidean:2B}
            \end{align}
            Note that \eqref{eq:uniquely_euclidean_implies_strongly_euclidean:2A} is always valid (since it has remainder $\rem{0}$), and \eqref{eq:uniquely_euclidean_implies_strongly_euclidean:2B} is valid if and only if $f(ab)<f(a)$. If so, these candidates must be equal, and so $\quo{b}=\quo{0}$, a contradiction. Thus \eqref{eq:uniquely_euclidean_implies_strongly_euclidean:2B} is invalid, so $f(ab)\ge f(a)$, and so $f(ab)=f(a)$.
            \item If $r\ne 0$, then $f(r)<f(ab)$. Note that the following are candidate divisions of $r$ by $a$:
            \begin{align}
                r &= \quo{(1-qb)}\cdot a + \rem{0}
                \label{eq:uniquely_euclidean_implies_strongly_euclidean:3A} \\
                r &= \quo{0}\cdot a + \rem{r}
                \label{eq:uniquely_euclidean_implies_strongly_euclidean:3B}
            \end{align}
            Note that \eqref{eq:uniquely_euclidean_implies_strongly_euclidean:3A} is always valid (since it has remainder $\rem{0}$), and \eqref{eq:uniquely_euclidean_implies_strongly_euclidean:3B} is valid if and only if $f(r)<f(a)$. If so, these candidates must be equal, and so $\rem{r}=\rem{0}$, a contradiction. Thus \eqref{eq:uniquely_euclidean_implies_strongly_euclidean:3B} is invalid, so $f(r)\ge f(a)$, and so $f(a)\le f(r)<f(ab)\le f(a)$, a contradiction.
        \end{enumerate}
        Thus $f(a)\le f(ab)$. \qedhere
    \end{enumerate}
\end{proof}

This result immediately yields a necessary and sufficient condition for a function to be uniquely Euclidean.

\begin{corollary}\label{uniquely_euclidean_iff_strongly_euclidean_and_ultra_euclidean}
    $f$ is uniquely Euclidean if and only if $f$ is strongly Euclidean and ultra-Euclidean.
\end{corollary}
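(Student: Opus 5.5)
The plan is to prove the two directions separately, using \Cref{uniquely_euclidean_implies_strongly_euclidean} to remove the extra hypothesis in \Cref{uniquely_euclidean_iff_ultra_euclidean}.

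\emph{Forward direction.} Assume $f$ is uniquely Euclidean.
\begin{enumerate}
    \item By \Cref{uniquely_euclidean_implies_strongly_euclidean}, $f$ is strongly Euclidean.
    \item Since $f$ is now strongly Euclidean, the ($\implies$) direction of \Cref{uniquely_euclidean_iff_ultra_euclidean} applies. That direction shows that a strongly Euclidean, uniquely Euclidean function is ultra-Euclidean.
    \item Hence $f$ is both strongly Euclidean and ultra-Euclidean.
\end{enumerate}

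\emph{Reverse direction.} Assume $f$ is strongly Euclidean and ultra-Euclidean. Then the ($\impliedby$) direction of \Cref{uniquely_euclidean_iff_ultra_euclidean} applies verbatim and gives that $f$ is uniquely Euclidean.

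\emph{Main point to check.} The only real issue is that \Cref{uniquely_euclidean_iff_ultra_euclidean} is stated under the standing hypothesis that $f$ is strongly Euclidean.
\begin{itemize}
    \item In the forward direction, that hypothesis is supplied by \Cref{uniquely_euclidean_implies_strongly_euclidean}.
    \item In the reverse direction, it is assumed outright.
\end{itemize}
So no new computation is needed; the corollary is a direct combination of the two earlier results. The remark after \Cref{uniquely_euclidean_iff_ultra_euclidean} and the $\F_4$ example show that the strongly Euclidean condition cannot be dropped from the right-hand side.
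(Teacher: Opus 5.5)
Your proposal is correct and is exactly the paper's argument: \Cref{uniquely_euclidean_implies_strongly_euclidean} supplies the strongly Euclidean hypothesis in the forward direction, and then the two directions of \Cref{uniquely_euclidean_iff_ultra_euclidean} give each implication. Your remark that the $\F_4$ example shows the strongly Euclidean condition cannot be dropped from the right-hand side is also accurate.
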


This follows by combining \Cref{uniquely_euclidean_iff_ultra_euclidean} and \Cref{uniquely_euclidean_implies_strongly_euclidean}.

\begin{theorem}
    If $f$ is uniquely Euclidean on $R$, so is $\ftil$.
\end{theorem}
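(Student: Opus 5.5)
The plan is to show that in fact $\ftil=f$ whenever $f$ is uniquely Euclidean, so the statement becomes trivial. By \Cref{uniquely_euclidean_implies_strongly_euclidean}, a uniquely Euclidean $f$ is strongly Euclidean on $R$. Then \Cref{strongly_euclidean_preserves_refinement} gives $\ftil=f$. Since $f$ is uniquely Euclidean by hypothesis, $\ftil$ is uniquely Euclidean as well.

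If one wanted an argument that does not pass through the equality $\ftil=f$, an alternative route would be via \Cref{uniquely_euclidean_iff_strongly_euclidean_and_ultra_euclidean}. First, $\ftil$ is strongly Euclidean by \Cref{euclidean_function_refinement}. Second, one would check that $\ftil$ is ultra-Euclidean, and this would be the only nontrivial step. The natural first attempt is again to use that $f$ is strongly Euclidean, which gives $\ftil=f$; $f$ is ultra-Euclidean by \Cref{uniquely_euclidean_iff_strongly_euclidean_and_ultra_euclidean}. So both routes reduce to the same observation.

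The only possible obstacle is making sure no circularity occurs, i.e.\ that \Cref{uniquely_euclidean_implies_strongly_euclidean} was proved using only unique Euclideanity of $f$ and not any property of $\ftil$. Inspecting its proof confirms this: it uses only candidate divisions validated by $f$. The whole proof is therefore a two-line chaining of \Cref{uniquely_euclidean_implies_strongly_euclidean} and \Cref{strongly_euclidean_preserves_refinement}.
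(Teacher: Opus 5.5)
Your proof is correct and is exactly the paper's argument. \Cref{uniquely_euclidean_implies_strongly_euclidean} makes $f$ strongly Euclidean, \Cref{strongly_euclidean_preserves_refinement} then gives $\ftil=f$, and the conclusion follows at once; your alternative route, as you note yourself, reduces to the same observation.
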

\begin{proof}
    By \Cref{uniquely_euclidean_implies_strongly_euclidean}, $f$ is strongly Euclidean, so by \Cref{strongly_euclidean_preserves_refinement}, $\ftil=f$, and so $\ftil$ is uniquely Euclidean.
\end{proof}

We are now ready to state and prove our improvement of Jodeit and Rhai's result (\Cref{jodeit_rhai}). We use some ideas from their original proofs \cite{jodeit} and \cite{rhai}, reformulated in our terminology. The crucial enhancement is that due to \Cref{uniquely_euclidean_implies_strongly_euclidean}, we can drop the assumption that $f$ is strongly Euclidean.

\begin{theorem}\label{uniquely_euclidean_implies_K_or_Kx}
    Suppose $f$ is uniquely Euclidean on $R$. Then either $R$ is a field or $R\cong\K[x]$ for some field $\K$.
\end{theorem}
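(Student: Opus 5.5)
The plan is to reduce to the original result and then, for completeness, sketch the structural argument it relies on. By \Cref{uniquely_euclidean_implies_strongly_euclidean}, $f$ is strongly Euclidean. Being also uniquely Euclidean by hypothesis, it satisfies exactly the assumptions of \Cref{jodeit_rhai}, which yields the conclusion at once. So the formal proof is a two-line combination. Since the paper promises to reformulate the ideas of Jodeit and Rhai, I would also give a self-contained argument. That argument uses \Cref{uniquely_euclidean_iff_strongly_euclidean_and_ultra_euclidean}, so that $f$ is strongly Euclidean and ultra-Euclidean, together with \Cref{strongly_euclidean_minimized_by_units}.

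First I would set $\K=R^\times\cup\{0\}$ and show it is a subfield. Closure under multiplication and inverses is clear. For addition, if $u,v$ are units with $u+v\ne0$, then ultra-Euclideanity gives $f(u+v)\le\max\{f(u),f(v)\}=m$. Hence $f(u+v)=m$, so $u+v$ is a unit by \Cref{strongly_euclidean_minimized_by_units}. If $R=\K$, then $R$ is a field and we are done. Otherwise, choose a nonunit $x\ne0$ minimizing $f$ among nonunits. Dividing any $a$ by $x$ leaves a remainder $r$ with $r=0$ or $f(r)<f(x)$, so $r\in\K$. Also, $x$ is transcendental over $\K$: a relation $\sum c_ix^i=0$ with $c_0\ne0$ would make $x$ divide a unit, and one may divide out powers of $x$ since $R$ is a domain.

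Next I would show every $a\in R$ is a polynomial in $x$ over $\K$. Repeated division gives $a=q_1x+r_0$, $q_1=q_2x+r_1$, and so on, with each $r_i\in\K$. The claim is that $f(q_{i+1})<f(q_i)$ whenever $q_i\ne0$ is a nonunit, so the process terminates with some $q_n\in\K$. This comparison is the main obstacle. From $q=q'x+r$ with $q'\ne0$, strong Euclideanity gives $f(q'x)>f(q')$, since $x$ is not a unit and by \Cref{strongly_euclidean_equality_iff_unit}. I then want $f(q)=f(q'x)$.

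I would first try an ultrametric equality argument. If $r\ne0$, then $f(r)=m<f(x)\le f(q'x)$. The ultra-Euclidean inequality gives $f(q)\le f(q'x)$. Applying it to $q'x=q+(-r)$ gives $f(q'x)\le\max\{f(q),f(r)\}$, and this maximum must be $f(q)$, otherwise $f(q'x)\le m$, which is impossible. Hence $f(q)=f(q'x)>f(q')$. If $r=0$, then $q=q'x$ directly.

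Finally, the map $\K[X]\to R$ sending $X\mapsto x$ is surjective by the expansion above and injective by transcendence, so $R\cong\K[x]$.
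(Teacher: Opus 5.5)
Your proposal is correct and is essentially the paper's argument. The two-line reduction via \Cref{uniquely_euclidean_implies_strongly_euclidean} and \Cref{jodeit_rhai} is valid, and your self-contained sketch matches the paper's proof step by step, including the descent $f(q')<f(q'x)\le f(q)$ from the ultrametric inequality applied to $q'x=q+(-r)$. The only differences are minor: your extra inequality $f(q)\le f(q'x)$ is not needed, and your transcendence argument makes injectivity explicit, whereas the paper only asserts it by calling the expansion unique.
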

\begin{proof}
    By \Cref{uniquely_euclidean_implies_strongly_euclidean}, $f$ is strongly Euclidean, so by \Cref{strongly_euclidean_minimized_by_units}, $f(1)$ is the minimum value of $f$ and is attained precisely by the units.\\

    Define $K=R^\times\cup\{0\}$. We first show that $K$ is a field. Clearly $K$ is closed under multiplication, and $K\exc{0}=R^\times$, so every nonzero element of $K$ is invertible. Suppose $u,v\in K$. If $u=0$, $v=0$ or $u+v=0$, then clearly $u+v\in K$. Suppose $u,v,u+v\ne 0$. Then $u,v\in R^\times$, so $f$ attains its minimum at $u$ and $v$. By \Cref{uniquely_euclidean_iff_strongly_euclidean_and_ultra_euclidean}, $f$ is ultra-Euclidean, so $f(u+v)\le\max\{f(u),f(v)\}$. Thus $f$ also attains its minimum at $u+v$, and so $u+v\in R^\times\sub K$. Thus $K$ is also closed under addition, and so it is a field.\\

    If $K=R$, then $R$ is a field and we are done. Suppose $K\ne R$. Then there exists $x\in R$ such that $f(x)=\min_{y\in R\exc K} f(y)$.\\

    Suppose $a\in R$. We will show that $a$ can be expressed uniquely as a polynomial in $x$ with coefficients in $K$. By assumption, there exist unique $q,r\in R$ such that $a=qx+r$ and ($r=0$ or $f(r)<f(x)$). In other words, $r=0$ or $r\in R^\times$, and so $r\in K$. We first show that $f(qx)\le f(a)$.
    \begin{itemize}
        \item If $r=0$, then $a=qx$, and so $f(qx)=f(a)$.
        \item If $r\ne 0$, then $a\ne 0$ (otherwise $a=qx+r=0\cdot x+0$ would be two valid candidate divisions of $a$ by $x$). By \Cref{uniquely_euclidean_iff_strongly_euclidean_and_ultra_euclidean}, $f$ is ultra-Euclidean, so $f(qx)=f(a-r)\le\max\{f(a),f(-r)\}$. Since $f$ is strongly Euclidean, we have $f(-r)=f(r)<f(x)\le f(qx)$, so $\max\{f(a),f(-r)\}=f(a)$, and so $f(qx)\le f(a)$.
    \end{itemize}
    Since $x$ is not a unit, by \Cref{strongly_euclidean_equality_iff_unit}, we have $f(q)<f(qx)$, and so $f(q)<f(a)$.\\

    If $f(q)<f(x)$, then $q\in K$, so $a=qx+r$ is a polynomial in $x$ with coefficients in $K$. If $f(q)\ge f(x)$, we can repeat this process with $q$ in place of $a$. This yields unique $q_1,r_1\in R$ such that $q=q_1x+r_1$ and $r_1\in K$, as well as $f(q_1)<f(q)$. Repeating this process further, we get a sequence $f(a)>f(q_0)>f(q_1)>\cdots$. This is a strictly decreasing sequence in $\Z_0^+$, so it must terminate, say at $q_{n-1}$. This yields $a=q_{n-1}x^n+r_{n-1}x^{n-1}+\cdots+r_1x+r_0$, where $r_0,r_1,\ldots,r_{n-1}\in K$ (and since $f(q_{n-1})<f(x)$, we also have $q_{n-1}\in K$). Thus every $a\in R$ can be expressed uniquely as a polynomial in $x$ with coefficients in $K$, and so $R\cong K[x]$.
\end{proof}

Since a Euclidean function on a field is strongly Euclidean if and only if it is constant, this immediately characterizes all strongly Euclidean (and \emph{a fortiori} all uniquely Euclidean) functions on a field: They are the constant functions $f:\K\exc{0}\to\Z_0^+$.\\

As for $\K[x]$, the situation is just as nice. \cite[Theorem 2.2]{rhai} states that every uniquely Euclidean function $f$ on $\K[x]$ is of the form $\phi\circ\deg$, where $\phi:\Z_0^+\to\Z_0^+$ is strictly increasing\footnote{More specifically, it states this result for functions on $\K[x]$ that are both strongly Euclidean \emph{and} uniquely Euclidean, but we can use \Cref{uniquely_euclidean_implies_strongly_euclidean} to drop the `strongly Euclidean' assumption.}.\\

Combining these results with \Cref{uniquely_euclidean_implies_K_or_Kx}, we obtain a full characterization of \emph{all} uniquely Euclidean functions:

\begin{corollary}\label{uniquely_euclidean_function_characterization}
    Suppose $f$ is uniquely Euclidean on $R$. Then exactly one of the following holds:
    \begin{enumerate}
        \item $R$ is a field and $f$ is constant.
        \item $R\cong\K[x]$, where $\K=R^\times\cup\{0\}$ is a field, and there is a strictly increasing function $\phi:\Z_0^+\to\Z_0^+$ such that $f=\phi\circ\deg$.
    \end{enumerate}
\end{corollary}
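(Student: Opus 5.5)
The plan is to combine \Cref{uniquely_euclidean_implies_K_or_Kx} with the two characterizations described just before the statement. By \Cref{uniquely_euclidean_implies_strongly_euclidean}, $f$ is strongly Euclidean, and by \Cref{uniquely_euclidean_implies_K_or_Kx}, either $R$ is a field or $R\cong\K[x]$. The proof therefore splits into these two cases, followed by a check that they are mutually exclusive.

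\emph{Case 1: $R$ is a field.} Since $f$ is strongly Euclidean, the example following \Cref{def:strongly_euclidean} shows that $f$ is constant. Explicitly, if $f(a)>f(c)$, then $f(a)>f\bigl(a(a^{-1}c)\bigr)$, which contradicts strong Euclideanity.

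\emph{Case 2: $R$ is not a field.} Here the proof of \Cref{uniquely_euclidean_implies_K_or_Kx} gives more than an abstract isomorphism. It shows that $K=R^\times\cup\{0\}$ is a field and that the evaluation map $K[t]\to R$, $t\mapsto x$, is an isomorphism, where $x$ minimizes $f$ on $R\exc K$. So we may take $\K=R^\times\cup\{0\}$. Transporting $f$ along this isomorphism gives a uniquely Euclidean function on $\K[t]$. By \cite[Theorem 2.2]{rhai}, upgraded through \Cref{uniquely_euclidean_implies_strongly_euclidean} as in the footnote, this function has the form $\phi\circ\deg$ with $\phi$ strictly increasing.

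If I had to justify the Rhai step directly, I would argue as follows.
\begin{itemize}
\item Show $f(p)=f(x^{\deg p})$ for every nonzero $p$. This uses ultra-Euclideanity (\Cref{uniquely_euclidean_iff_strongly_euclidean_and_ultra_euclidean}) together with uniqueness of division of $p$ by $x^{\deg p}$.
\item Define $\phi(n)=f(x^n)$. Then $\phi$ is strictly increasing by \Cref{strongly_euclidean_equality_iff_unit}, since $x$ is not a unit.
\end{itemize}
The main obstacle is the first bullet, namely that $f$ depends only on degree. I would attack it by induction on degree. Write $p=cx^n+s$ with $c\in\K^\times$ and $\deg s<n$. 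Ultra-Euclideanity gives $f(p)\le\max\{f(cx^n),f(s)\}=f(x^n)$. For the reverse inequality, note that if $f(p)<f(x^n)$, then $x^n=\quo{0}\cdot p+\rem{x^n}$ would fail validity. Instead, compare the two candidates $s=\quo{0}\cdot x^n+\rem{s}$ and $s=\quo{?}\cdot p+\dots$, or use that $x^n=c^{-1}p-c^{-1}s$ together with ultra-Euclideanity, which gives $f(x^n)\le\max\{f(p),f(s)\}$. Since $f(s)<f(x^n)$ by induction and monotonicity of $\phi$, this forces $f(x^n)\le f(p)$.

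\emph{Exclusivity.} The two cases cannot both hold, because $\K[x]$ is never a field: $x$ is not invertible.
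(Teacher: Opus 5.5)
Your proposal is correct and follows essentially the paper's route. You apply \Cref{uniquely_euclidean_implies_K_or_Kx} together with \Cref{uniquely_euclidean_implies_strongly_euclidean}, use constancy of strongly Euclidean functions on a field, invoke \cite[Theorem 2.2]{rhai} (upgraded as in the footnote) for the polynomial case, and get exclusivity from $x$ not being a unit. Your optional direct argument also replaces the citation of Rhai soundly: the induction using $f(p)\le\max\{f(x^n),f(s)\}$ and $f(x^n)\le\max\{f(p),f(s)\}$, with $f(s)<f(x^n)$ from strict growth of $n\mapsto f(x^n)$ via \Cref{strongly_euclidean_equality_iff_unit}, makes the corollary self-contained where the paper only cites it.
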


A consequence of \Cref{uniquely_euclidean_function_characterization} is that every uniquely Euclidean function $f$ on $\K[x]$ must be constant among polynomials of the same degree. If we only assume $f$ is ultra-Euclidean, the result fails, as the following example shows:

\begin{example}
    Suppose $R=\F_4[x]$, where $\F_4$ is the field with four elements. Define $f:\F_4[x]\to\Z_0^+$ by $f(1)=0$ and $f(p)=\deg(p)+1$ for $p\not\equiv 1$. Then $f$ is ultra-Euclidean on $\F_4[x]$, but $f$ is not constant among polynomials of degree $0$ (since its value is $0$ at $1$ and $1$ at other nonzero constants).
\end{example}

\section{Further investigation}

At this point, a natural question to ask is the following:

\begin{conjecture}[Conjecture]
    If $f$ is ultra-Euclidean on $R$, so is $\ftil$.
\end{conjecture}

We were not able to prove or disprove this conjecture. If true, this would allow us to further strengthen \Cref{uniquely_euclidean_implies_K_or_Kx}:

\begin{conjecture}[Corollary]
    If $R$ has an ultra-Euclidean function, then either $R$ is a field or $R\cong\K[x]$ for some field $\K$.
\end{conjecture}

Any counterexample to the conjecture must be a Euclidean domain $R$ with a function $f:R\exc{0}\to\Z_0^+$ such that $f$ is ultra-Euclidean but \emph{not} strongly Euclidean, while $\ftil$ is strongly Euclidean but \emph{not} ultra-Euclidean.

\subsection*{Acknowledgments}

I would like to thank Thomas Preu for his corrections and suggestions.

\phantomsection 
\addcontentsline{toc}{section}{References} 
\printbibliography 

\end{document}